\newcommand{\RR}{\mathbb R}
\newcommand{\EE}{\mathbb E}
\newcommand{\PP}{\mathbb P}
\newcommand{\C}{\mathcal C}
\newtheorem{example}{Example}
\newtheorem{proposition}{Proposition}
\newtheorem{theorem}{Theorem}
\begin{document}

\title{Properties of a random Cantor set with overlaps}
\author{Anna Chiara Lai and Paola Loreti}
\subjclass{28A78 , 28A80  11A63 , 60J80 }
\keywords{Random Cantor set, expansions in non-integer base, branching processes and critical probability}
\date{}
\begin{abstract}
    We study the topology and the Hausdorff dimension of a random Cantor set with overlaps, generated by an iterated function system with scaling ratio equal to the Golden Mean. The results extend known formulas to a case where the Open Set Condition fails. Our methodology is based on the theory of expansions in non-integer bases. 
\end{abstract}
\maketitle
Readers are invited to consult the revised version of this paper published in\\
Lai, A.C., Loreti, P. Properties of a random Cantor set with overlaps. Res. number theory \textbf{12}, 71 (2026).

\section{Introduction}

In this paper, we investigate the Hausdorff dimension of a random Cantor set, via the theory of expansions in non-integer bases.

\medskip We focus on a one-dimensional case study related to the non-integer base binary expansions of real numbers. More precisely, we consider the Iterated Function System (IFS) generated by the contractive maps 
$$f_i(x):=\frac{1}{\varphi}(x+i), \qquad i=0,1$$
where $\varphi$ is the golden mean. The choice of the particular base $\varphi$ is motivated by its prominent role in the theory of expansions in non integer bases: the uniqueness phenomena in the representation that are the core of  OSC-based arguments begin to emerge only when $q>\varphi\sim 1.6180$ \cite{golden}, and a continuum of unique expansions is ensured only above the Komornik-Loreti constant $q_{KL}\sim 1.78723$ \cite{klconstant}. This IFS is widely investigated in the theory of expansions in non-integer bases introduced in \cite{renyi57}, because its iteration provides expansion of any $x\in[0,\varphi]$ of the form 
$$x=\sum_{i=1}^\infty \frac{c_i}{\varphi^i}.$$
In particular for all $n$ and for all  $\mathbf c=c_1c_2\cdots c_n\in\{0,1\}^n$ we have the identity
$$I_{\mathbf c}:=\left[\sum_{i=1}^n \frac{c_i}{\varphi^i},\sum_{i=1}^n \frac{c_i}{\varphi^i}+\frac{1}{\varphi^{n-1}}\right]=f_{c_1}\circ\cdots\circ f_{c_n}([0,\varphi]).$$
Starting from $\mathcal I_0=\{[0,\varphi]\}$, we iteratively construct the family of intervals as following. For each interval $I\in \mathcal I_n$ and for each $i=0,1$, we consider the interval $f_i(I)$. The interval $f_i(I)$ is retained with probability $p$ and discarded with probability $1-p$. The collection $\mathcal I_{n+1}^p$ is then given by 
$$\mathcal I^p_{n+1}:=\{f_i(I)\mid f_i(I) \text{ is retained},\, i=0,1,\, I\in \mathcal I_n\}.$$
From this, we can define the random Cantor prefractal $\mathcal C_n^p=\cup_{I\in \mathcal I_n^p} I$ and the Cantor fractal $$\mathcal C^p:=\bigcap_{n\geq 0} \mathcal C^p_n.$$
Our goal is to investigate the expected topology and dimension of $\mathcal C^p$ as a function of the probability parameter $p$. 
The first notion of fractal dimension was proposed by Hausdorff \cite{hausdorff1918dimension,edgar2019classics}. Since then, many other definitions have been introduced, among them we make use of the box counting dimension \cite{bouligand1928ensembles}.
Our main result states that the expected Hausdorff dimension of $\mathcal C^p$ is provided by the closed formula
\begin{equation}\label{exp}\EE(\dim (\mathcal C^p))=\begin{cases}
    0&\text{if }p\in[0,1/2]\\
    \dfrac{\log(2p)}{\log \varphi}&\text{if }p\in[1/2,\varphi/2]\\
    1&\text{if }p\in[\varphi/2,1].\end{cases}\end{equation}

The above formula generalizes known results holding in the case in which the \emph{Open Set Condition} (OSC) holds. More precisely, we have from \cite{fal86} that if the scaling ration $\varphi$ is replaced by a real number $q\geq 2$ and we denote by $\mathcal C^{p,q}$ the resulting random Cantor set, we have
\begin{equation}\label{exp}\EE(\dim (\mathcal C^{p,q}))=\begin{cases}
    0&\text{if }p\in[0,1/2]\\
    \dfrac{\log(2p)}{\log q}&\text{if }p\in[1/2,1].\end{cases}\end{equation}


An IFS $\mathcal G=\{g_1,\dots, g_m\}$ satisfies the OSC if there exists an open set $X$ such that  $\mathcal G(X)\subseteq X$
and the sets $g_1(X),\dots,g_m(X)$ are pairwise disjoint \cite{moran}. Note that $\{f_0,f_1\}$ does not satisfy the OSC, because the similarity dimension $\log(2)/\log(\varphi)$   \cite{fal88} of its unique compact fixed point is greater than $1$, which is the dimension of the domain of $\{f_0,f_1\}$ \cite{schief}.

  A novelty of the formula \eqref{exp} is that in the non-OSC scenario a new probability threshold $p_d:=\varphi/2<1$  emerges: if $p_d<p\leq 1$ then $\EE(\dim (\mathcal C^p))=1$, even if almost surely $\mathcal C^p$ has no connected components.

The difficulty in the analysis lies in the fact that the well-known redundancy phenomena in non-integer base demand to estimate the expected number of survived intervals explicitly.  Indeed, at any level $n$, several distinct binary sequences $c_1\cdots c_n$ may correspond to the same interval in the prefractal $\mathcal C_n^p$, because its endpoints can be represented by different binary expansions in base $\varphi$ -- in this case we say that the underlying IFS has \emph{exact overlap}, the definition is explicited below in the Introduction. Also, even when distinct, the lack of Open Set Condition also implies that two intervals may overlap.  We selected the greedy expansions as representatives of such equivalence classes of intervals and, by applying some results on the theory of expansions in non-integer bases, we were able to quantify both the expected number of intervals in function of $p$ as well as the size of the possible overlaps. 

 Addressing random IFS not satisfying the OSC may offer quantitative and qualitative improvements of the model. Indeed, on the one hand, a larger number of systems allows more flexibility in tuning the parameters; on the other hand, allowing prefractals to intersect means describing more precisely local density phenomena. 

\medskip
\emph{State of the art.} The theoretical background of the present work goes back to Moran \cite{moran}, who first introduced the Open Set Condition. The 1980's  knew a rising interest in fractals, due to Mandelbrot's celebrated book \cite{mandelbrot} and the formalization of the IFSs and the Hutichinson's theorem, characterizing their attractor \cite{hutchinson}. A few years later, random versions of self-similar sets emerging from linear IFSs  were introduced, in connection with the percolation theory on random trees -- see the papers by Kenneth Falconer \cite{fal86,fal89} on the dimension of random Cantor sets and of their projection  in the higher dimensional settings. The morphology of random Cantor sets, and the idea that there exists some threshold probabilities that shift the geometry of the random Cantor set, were formalized in  \cite{dek90}.  Under the assumption of the OSC, higher-dimensional random Cantor sets were further investigated in \cite{falgri92}. The recent survey paper \cite{survey25} provides a wide overview of the evolution of the research on this topic in the subsequent decades. We focus now on more recent results on the case without OSC, which is the object of our study. The paper \cite{hochman} addresses the dimensionality problem of iterated function systems with overlaps in the deterministic case. We recall that an IFS of the form $\{g_{a_i}(x):=(x+a_i)/q\mid a_i\geq 0, i=1,\dots,m\}$  is said to contain an exact overlap
if there exists $c_1,\dots,c_n,d_1,\dots,d_n  \in \{a_i\}_{i=1}^m$, for some $n\geq 0$, such that $g_{c_n}\circ\cdots \circ g_{c_1}(x) = g_{d_n}\circ\cdots \circ g_{d_1}(x)$ and $ c_j\not=d_j$ for some $j=1,\dots,n$. In other words, an exact overlap is given when two or more intervals in the prefractal coincide, determining a discrepancy between the similarity dimension and the Hausdorff dimension of the fractal set, a phenomenon also occurring in our model. Recent studies on the dimensionality of IFS with exact overlap can be found in \cite{bak23,rap22,baker24} and in \cite{hoc22JEMS}: the approach in these papers is measure theoretic, while here we adopt a combinatorial methodology. A random version of such problem for affine IFS was investigated in \cite{guo}, where a condition (the Finite Intersection Property) allowing overlaps and weaker than the OSC was investigated. 
For random fractals with overlaps emerging from  non-integer base expansions we refer to \cite{daj24,kem16} and in the references therein. 
Our methodology is based on counting the exact overlaps, moving the problem to a combinatorial study on the cardinality of finite expansions in base $\varphi$.  This approach roots in the seminal paper by \cite{renyi57}, where expansions in non-integer bases were first introduced, and in the subsequent studies on the redundancy of the representations in non-integer base \cite{golden,gle01}. In the case of overlapping intervals, we consider a representative which is given by the \emph{greedy expansion} of its left endopoint. The digit constraints on greedy expansions with the golden mean as base (and, in general Pisot, numbers) are at the core of our argument; they were first investigated in \cite{parry1960} -- related papers are \cite{frougny1992,akiyama04}. The problem of counting the number of finite expansions in base $\varphi$ was recently addressed, among others, in \cite{dekking2024}, here we proof recursive formulas that better fit our arguments.

\medskip

\medskip



\medskip

\medskip
\emph{Organization of the paper.} In Section \ref{s2} we introduce the random Cantor set under analysis and we present its main properties. Section \ref{s3} contains the statement and the proof of our main result. In Section \ref{s4} we draw our conclusions. The Appendix \ref{app1} aims to make the paper as self-contained as possible by presenting some original and technical proofs of earlier results.

\section{A random Cantor set with overlaps and its properties}\label{s2}

As anticipated in the Introduction, we consider the contractive linear maps
$$f_i(x):=\frac{1}{\varphi}(x+i), \quad i\in\{0,1\}$$
where $\varphi:=\frac{1+\sqrt{5}}{2}$ is the golden mean, namely the greatest solution of the equation $\varphi^2=\varphi+1$. Consider the Iterated Function System (IFS) $\mathcal F:=\{f_0,f_1\}$ and the associated Hutchinson operator
$$\mathcal F(X):=\bigcup_{i=0,1}f_i(X)$$
defined for all $X\subset \RR$. 
Let the initial compact set be the interval
$$I_0:=\left[0,\frac{1}{\varphi-1}\right]=[0,\varphi].$$
It is easy to check that $\mathcal F(I_0)=I_0$ and, consequently, $I_0$ is the attractor of $\mathcal F$. We need a finer description of the iteration of $\mathcal F$. To describe the iteration more precisely, we define for all $n$ and for all  sequences $\mathbf c=c_1\dots c_n\in\{0,1\}^n$
the interval 
$$I_\mathbf c:=f_{c_n}\circ\cdots\circ f_{c_1}(I_0)=\left[\sum_{i=1}^n \frac{c_{n-i}}{\varphi^i},\sum_{i=1}^n\frac{ c_{n-i}}{\varphi^i}+\frac{1}{\varphi^{n-1}}\right].$$
Consider the collection of intervals for the deterministic case $\mathcal I_n=\{I_{\mathbf c}\mid \mathbf c\in \{0,1\}^n\}$. 
Note that elements of $\mathcal I_n$ may intersect 
(for instance $f_0(I_0)\cap f_1(I_0)\not=\emptyset$) 
or even overlap (after a few computation one can check, for instance, that $I_{110}= I_{001}$), see Figure \ref{fig1}. 
In case of overlapping, only one representative is included in $\mathcal I_n$, so that in general $\#\mathcal I_n<2^n$. We point out  that  $\# \mathcal I_n\sim \varphi^n$ -- see Proposition \ref{p1} below. 

\begin{figure}[h!]\centering 
\includegraphics[width=\textwidth, height=0.4\textheight]{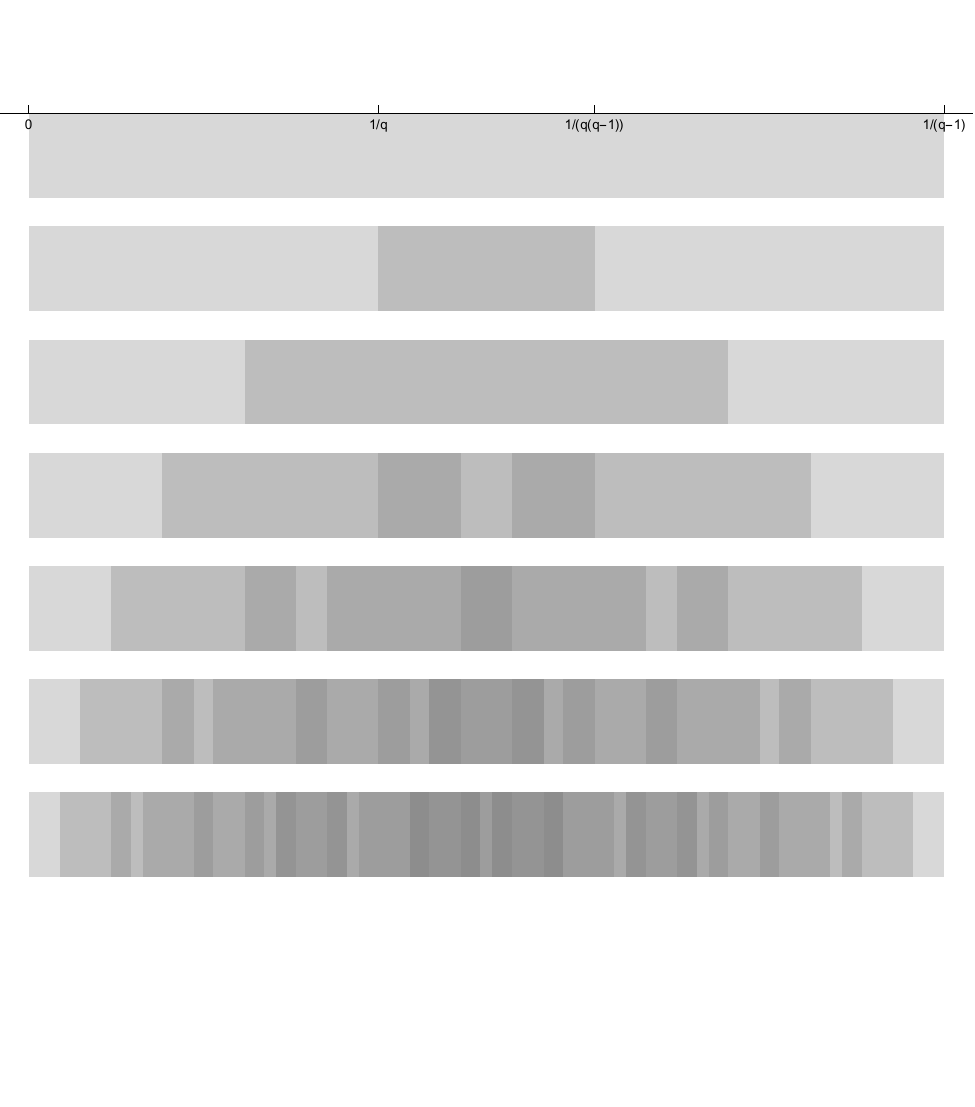}
\vskip-1cm
\caption{\label{fig1}
Band visualization of the deterministic case. Each horizontal band corresponds to the union of the intervals in $\mathcal I_n$, for $n=0,\dots,6$. Interval intersections are highlighted by shading. For instance the third band corresponds to $\mathcal I_2=\{[0,1/\varphi],[1/\varphi,2/\varphi],[1/\varphi^2,1],[1,\varphi]\}$, the darker central area is the union of the pairwise, disjoint intersections $[0,1/\varphi]\cap [1/\varphi^2,1]$, $[1/\varphi,2/\varphi]\cap [1/\varphi,1]$ and $[1/\varphi^2,1]\cap [1/\varphi,2/\varphi]$. }
\end{figure}

To randomize the process, we consider the collection $\mathcal I_n^p$ (with fixed $p\in(0,1]$) recursively generated from $I_0^p=[0,\varphi]$ by the following Bernoulli process. 
For each $n\geq 1$,  the IFS is applied to each element $I_\mathbf c$ of $\mathcal I_n^p$, namely the sets
$I_{\mathbf cc_{n+1}}=f_{c_{n+1}}(I_\mathbf c)$ with $c_{n+1}\in\{0,1\}$ are considered. Each of them is retained with probability $p$ and discarded with probability $1-p$, so that 
$$\mathcal I_{n+1}^{p}=\{I_{\mathbf {c}{c_{n+1}}}\mid I_\mathbf c\in \mathcal I_n^p \text{ and } f_{c_{n+1}}(I_\mathbf c) \text{ is retained }\}.$$
In other words, $\mathcal I_{n}^p$ contains all the intervals $f_{\mathbf c}(I_0)$ where $\mathbf c$ are the branches of a random binary tree with uniform probability $p$ and depth $n$. The resulting prefractal is 
$$\mathcal C_n^p:=\bigcup_{I\in \mathcal I_n^p} I,$$
see Figure \ref{fig2}.
\begin{figure}[h!]\centering
\includegraphics[width=\textwidth, height=0.4\textheight]{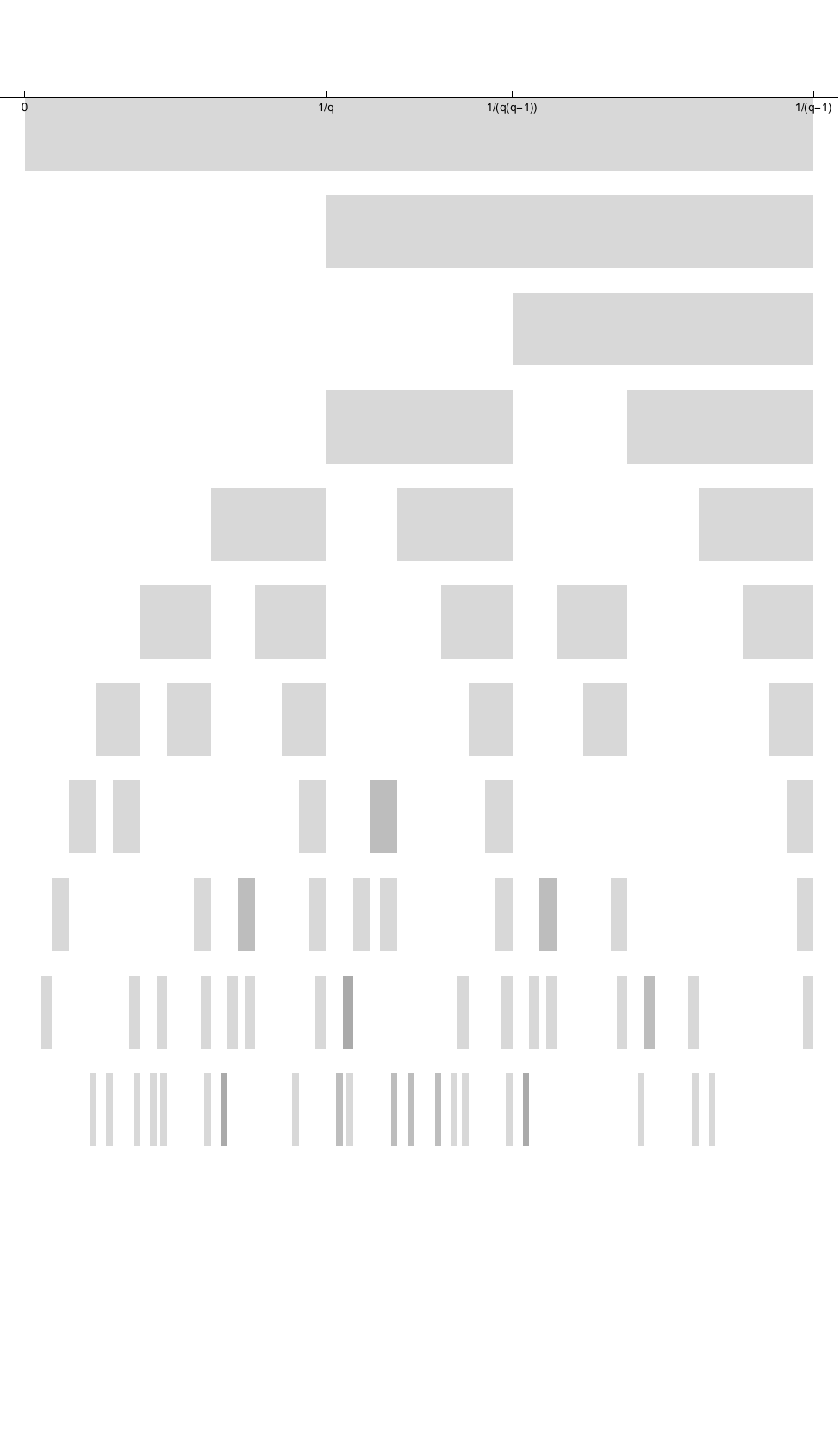}  \vskip-1cm
\caption{\label{fig2}
Band visualization of a random prefractal with $p=0.7$. Each horizontal band corresponds to $\mathcal C^p_n$ for $n=0,\dots,10$. Interval intersections are highlighted by shading. 
}
\end{figure}

The associated random Cantor set is
$$\mathcal C^p:=\bigcap_{n\geq 0} \mathcal C_n^p.$$
Note that the above definition is well posed since $f_{c}(I_\mathbf c)\subset I_\mathbf c$ for all $n\geq 0, \mathbf c\in\{0,1\}^n$ and $c\in\{0,1\}$. Hence $\mathcal C_{n+1}^p\subseteq C_{n}^p$ for all $n$ and this monotonicity implies the existence of the (possibly empty) limit set $\mathcal C^p$. 





The lack of OSC in our system, showed in the Introduction, requires an ad hoc analysis of the combinatorics of $\mathcal I_n^p$ and on the measure of $\mathcal C^p_n$. 
As remarked above the map
\begin{equation}\label{map}
    \mathbf c\mapsto I_\mathbf c=\left[\sum_{i=1}^{|\mathbf c|}\frac{c_{|\mathbf c|-i}}{\varphi^i},\sum_{i=1}^{|c|}\frac{c_{|\mathbf c|-i}}{\varphi^i}+\frac{1}{\varphi^{|\mathbf c|-1}}\right]
\end{equation}
is not injiective. We introduce the equivalence class
$$\mathbf c \sim \mathbf d \quad \Leftrightarrow \quad I_\mathbf c=I_\mathbf d$$
we denote by $[\mathbf c]$ be the equivalence class of $\mathbf c$ and let $\{0,1\}^n/\sim$ be the set of such classes. We can choose as class representatives the \emph{greedy expansions}. Classically, greedy expansion represent the lexicographic greatest expansion $(c_i)_{i\geq 1}$ of a number $x=\sum_{i=0}^\infty c_i q^{-i}$, given the non-integer bases $q$. In the particular case $q=\varphi$, where $\varphi$ is the golden mean, greedy expansions are binary, possibily finite (namely ending with $0^\infty$), sequences characterized by the non-occurrence of the forbidden word $011$ \cite{parry1960}. The fact that greedy expansions are eligible as class representatives follows from the fact that any binary expansion of length $n$ can be converted into a greedy expansion of the same number with the same length $n$  \cite{frou1,frou2} -- note that it suffices to replace any occurrence of $011$ of a non-greedy expansion with $100$, to get a greedy expansion of the same number.
 For $\mathbf c=c_1\cdots c_n\in\{0,1\}^n$, define $rev ({\mathbf c}):=c_n c_{n-1}\cdots c_1$, so that setting
$$\mathbf G_n:=\{\mathbf c\in\{0,1\}^n\mid \mathbf c \text{ is greedy}\} $$
we  have in the deterministic case
$$\mathcal I_n=\{f_{rev({\mathbf c})}(I_0)\mid \mathbf c\in\mathbf G_n\}=\bigcup_{\mathbf c\in\mathbf G_n}\left[\sum_{i=1}^n\frac{c_i}{\varphi^i},\sum_{i=1}^n\frac{c_i}{\varphi^i}+\frac{1}{\varphi^{n-1}}\right].$$

\begin{example}
    For $n=3$, 
    $$\mathbf G_3=\{000,001,010,100,110,101,111\}$$
    and $\#[\mathbf c]=1$ if $\mathbf c\not=100$ and 
    $$\#[100]=\#\{100,011\}=2.$$
    \end{example}
Let 
$$\mathbf G_n^h:=\{\mathbf c\in \mathbf G_n\mid c_{n-h-1}\cdots c_n=10^h\} \quad \text{for $h\in 0,\dots,n-1$}$$
and $\mathbf G^n_{-1}:=(0)^n$
so that 
\begin{equation}\label{gn}\mathbf G_n=\bigcup_{h=-1}^{n-1} \mathbf G_n^h.\end{equation}
The first result collects some known results on $\mathcal I_n$ in the deterministic case, see \cite{Loth} for a general introduction and \cite{spect1,spect2} for more technical aspects, concerning the granularity of expansions in non-integer bases. We propose in the Appendix \ref{app1} self-contained proofs for the seek of clarity. Recall that the $n$-Fibonacci number $F_n$ is recursively defined by $F_0=0,F_1=1$ $F_{n+2}=F_{n+1}+F_{n}$ for $n\geq 0$.

\begin{proposition}\label{p1}
Let $n\geq 0$ and consider the collection $\mathcal I_n=\{I_\mathbf c=f_\mathbf c(I_0)\mid \mathbf c\in\{0,1\}^n\}$. Then
\begin{enumerate}[i)]
\item the cardinality of $\mathcal I_n$ is $F_{n+3}-1$, where $F_n$ is the $n$-th Fibonacci number; 
\item for all $I=f_\mathbf c(I_0)$ with $\mathbf c\in \{0,1\}^n$ we have $|I|=\frac{1}{\varphi^{n-1}}$;
\item for all $\mathbf c,\mathbf d\in \{0,1\}^n$ with $\mathbf c\not=\mathbf d$ 
$$|I_\mathbf c \cap I_\mathbf d|\in \left \{0,\frac{1}{\varphi^{n+1}},\frac{1}{\varphi^{n}},\frac{1}{\varphi^{n-1}}\right\}.$$
\end{enumerate} 
\end{proposition}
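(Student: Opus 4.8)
I would prove the three items in order of increasing difficulty. Item (ii) is immediate: each $f_i$ is a similarity of ratio $1/\varphi$, so $f_{\mathbf c}=f_{c_n}\circ\cdots\circ f_{c_1}$ is a similarity of ratio $\varphi^{-n}$ and $|I_{\mathbf c}|=\varphi^{-n}|I_0|=\varphi^{1-n}$. For item (i) the plan is to reduce the count to a word count. As recalled just before the statement, every $\mathbf c\in\{0,1\}^n$ is equivalent to a greedy word of the same length (replace each $011$ by $100$), so the map $\mathbf G_n\to\mathcal I_n$ is onto; it is injective because two distinct greedy words of length $n$ represent, by uniqueness of finite greedy expansions, distinct numbers $\sum_i c_i\varphi^{-i}$, hence distinct left endpoints, hence — the lengths being equal by (ii) — distinct intervals. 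Thus $\#\mathcal I_n=\#\mathbf G_n$, and it remains to count the binary words of length $n$ avoiding the factor $011$. Here I would use the partition \eqref{gn}: for $0\le h\le n-1$ a word of $\mathbf G_n^h$ is obtained by appending $10^h$ to a greedy word of length $n-h-1$ not ending in $01$ (the only way a forbidden $011$ could arise at the junction), so $\#\mathbf G_n^h$ equals the number of such prefixes; substituting these counts into \eqref{gn} and eliminating the auxiliary "not ending in $01$" count yields a linear recursion for $\#\mathbf G_n$ which, together with the initial values, is the one satisfied by $F_{n+3}-1$, and a one-line induction finishes. (Equivalently, a three-state transfer matrix tracking whether a word ends in $0$, in $01$, or in neither, gives directly $\#\mathbf G_{n+1}=\#\mathbf G_n+\#\mathbf G_{n-1}+1$.)

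Item (iii) is the heart of the matter. Both $I_{\mathbf c}$ and $I_{\mathbf d}$ have length $L:=\varphi^{1-n}$, so $|I_{\mathbf c}\cap I_{\mathbf d}|=\max(0,L-\delta)$ where $\delta$ is the distance between their left endpoints. Writing the left endpoints as $\sum_{i=1}^n c_i\varphi^{-i}$ (up to the reversal in \eqref{map}) gives $\delta=\varphi^{-n}|P(\varphi)|$ with $P(x)=\sum_{j=0}^{n-1}\eta_j x^j$, $\eta_j\in\{-1,0,1\}$. A nonempty proper intersection forces $0<\delta<L$, i.e.\ $0<|P(\varphi)|<\varphi$, so everything reduces to the claim: if $0<|P(\varphi)|<\varphi$ then $|P(\varphi)|\in\{\varphi^{-1},1\}$ — which gives $\delta\in\{\varphi^{-n-1},\varphi^{-n}\}$, hence $L-\delta\in\{\varphi^{-n},\varphi^{-n-1}\}$, while the values $0$ and $\varphi^{1-n}$ of the intersection correspond respectively to $\delta\ge L$ and to $P(\varphi)=0$ (i.e.\ $I_{\mathbf c}=I_{\mathbf d}$). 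To prove the claim I would pass to $\mathbb{Q}(\sqrt 5)$: $P(\varphi)=a+b\varphi\in\mathbb{Z}[\varphi]$, and its Galois conjugate $\overline{P}=\sum_{j=0}^{n-1}\eta_j\bar\varphi^{\,j}$, with $\bar\varphi=(1-\sqrt 5)/2$ and $|\bar\varphi|=\varphi^{-1}$, obeys the \emph{strict} estimate $|\overline{P}|\le\sum_{j=0}^{n-1}\varphi^{-j}<\varphi^2$. Since $P(\varphi)-\overline{P}=b\sqrt 5$, we obtain $|b|\sqrt 5\le|P(\varphi)|+|\overline{P}|<\varphi+\varphi^2=\varphi^3$, whence $|b|\le1$; inspecting the finitely many integers $a$ with $|a+b\varphi|<\varphi$ for $b\in\{-1,0,1\}$ leaves only $|P(\varphi)|\in\{\varphi^{-1},\,1,\,\varphi^{-2},\,|\varphi-3|\}$, and the last two are excluded precisely because for them $|\overline{P}|$ would have to equal, resp.\ exceed, $\varphi^2$.

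The step I expect to be the main obstacle is this last exclusion in (iii): showing that the "short" value $\varphi^{-2}=2-\varphi$ (and likewise $|\varphi-3|$) cannot be attained, equivalently that $2-\varphi$ is not a $\{-1,0,1\}$-polynomial evaluated at $\varphi$. A plain two-sided norm bound only yields $\varphi^{-2}<|P(\varphi)|<\varphi$ and leaves the window $(\varphi^{-2},\varphi^{-1})$ open; what actually closes it is the strict inequality $|\overline{P}|<\varphi^2$, which holds because $P$ is a \emph{finite} sum of terms $\pm\varphi^{-j}$.
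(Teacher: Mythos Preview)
Your argument is correct. Items (i) and (ii) are handled essentially as in the paper (a linear recursion for $\#\mathbf G_n$, and the obvious contraction ratio), with only cosmetic differences in bookkeeping.

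Item (iii), however, is where your route genuinely diverges. The paper argues combinatorially: assuming $\mathbf c,\mathbf d$ are greedy and the left endpoint of $I_{\mathbf d}$ lies in $I_{\mathbf c}$, it uses Parry's characterisation (after a $0$ the tail is $<1/\varphi^k$) to force $\mathbf d=10^{n-2}d_n$ and $\mathbf c=(01)^{\lfloor n/2\rfloor-1}\cdots$, and then reads off the intersection length by cases on the last two digits. Your approach is algebraic: you reduce to showing that a nonzero $P(\varphi)$ with $P\in\{-1,0,1\}[x]$ and $|P(\varphi)|<\varphi$ must satisfy $|P(\varphi)|\in\{\varphi^{-1},1\}$, and you prove this via the Galois conjugate bound $|P(\bar\varphi)|\le\sum_{j=0}^{n-1}\varphi^{-j}<\varphi^2$, which forces $|b|\le1$ in $P(\varphi)=a+b\varphi$ and then excludes $a+b\varphi\in\{\pm(2-\varphi),\pm(3-\varphi)\}$ because there $|a+b\bar\varphi|\ge\varphi^2$. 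This is clean and self-contained; it uses nothing about greedy expansions and makes transparent that the statement is really a manifestation of the Pisot property of $\varphi$. The paper's route, by contrast, buys explicit structural information about \emph{which} pairs $(\mathbf c,\mathbf d)$ realise each intersection value --- information your Galois argument does not directly yield, but which is not needed for the proposition as stated. Your identification of the strict inequality $|P(\bar\varphi)|<\varphi^2$ (from finiteness of the sum) as the key to ruling out $|P(\varphi)|=\varphi^{-2}$ is exactly right: that case corresponds to $|a+b\bar\varphi|=\varphi^2$ on the nose, so a non-strict bound would not suffice.
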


\begin{proof}
As mentioned above, the proof is postponed in Appendix \ref{app1}.
\end{proof}

In the non-deterministic case, in order to quantify the expected cardinality of $\mathcal I^p_n$, we need to know the cardinality $\#[\mathbf c]$ of each class $[\mathbf c]$, namely how many different binary expansion there exists for each greedy expansion.


The next results shows some recursive relation on the cardinality of each class of equivalence.

\begin{proposition}\label{p2}
Let $n\geq 0$. Then
\begin{enumerate}[(i)]
    \item If $\mathbf c 1\in \mathbf G_{n+1}^0$ then $\#[\mathbf c1]=\#[\mathbf c]$.
    \item If $\mathbf c 0\in \mathbf G_{n+1}^1$ then $\#[\mathbf c0]=\#[\mathbf c]$.
    \item If $\mathbf c\in \mathbf G_{n}^{h}$ for some $h\geq 0$ then 
$$[\mathbf c 00]\geq [\mathbf c]+ 1 $$
\end{enumerate}
\end{proposition}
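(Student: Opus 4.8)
The plan is to translate each of the three statements into a count of binary expansions of a fixed real number and then exploit the partition of such expansions by their last one or two digits. For a word $\mathbf a=a_1\cdots a_m$ set $\pi(\mathbf a):=\sum_{i=1}^m a_i\varphi^{-i}$; then, for words of equal length, $\mathbf a\sim\mathbf b$ iff $\pi(\mathbf a)=\pi(\mathbf b)$, so $\#[\mathbf c]$ is the number of $\mathbf d\in\{0,1\}^{|\mathbf c|}$ with $\pi(\mathbf d)=\pi(\mathbf c)$. I shall use two standard facts about greedy ($=011$-free) words, both recalled in the Introduction: (a) each class contains a unique greedy word of any prescribed length (they are all the lexicographically largest element of the class); (b) the greedy expansion is the shortest one, so a number possessing an expansion of length $\ell$ has greedy expansion of length $\le\ell$ (equivalently: appending $0$'s to a $011$-free word keeps it $011$-free). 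I also use repeatedly the golden-mean identity $\varphi^{-m}=\varphi^{-(m+1)}+\varphi^{-(m+2)}$, together with $\varphi^{-n}-\varphi^{-(n+1)}=\varphi^{-(n+2)}$ and $\varphi^{-(n+1)}+\varphi^{-(n+2)}=\varphi^{-n}$.

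For (i), splitting $[\mathbf c1]$ by the last digit gives that $\#[\mathbf c1]$ equals the number of length-$n$ expansions of $\pi(\mathbf c1)$ plus $\#\{\mathbf e1:\pi(\mathbf e)=\pi(\mathbf c)\}=\#[\mathbf c]$. The first summand vanishes: $\mathbf c1\in\mathbf G^0_{n+1}$ is $011$-free, hence it is the greedy expansion of $\pi(\mathbf c1)$, and it has length $n+1$; if $\pi(\mathbf c1)$ had a length-$n$ expansion then by (b) its greedy expansion would have length $\le n$, a contradiction. For (ii), the same split gives $\#[\mathbf c0]=\#[\mathbf c]+N$, where $N$ is the number of length-$n$ expansions of $\pi(\mathbf c)-\varphi^{-(n+1)}$. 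Since $\mathbf c0\in\mathbf G^1_{n+1}$ forces $c_n=1$, write $\mathbf c=\mathbf c''1$; then $\pi(\mathbf c)-\varphi^{-(n+1)}=\pi(\mathbf c'')+\varphi^{-(n+2)}=\pi(\mathbf c''001)$, and $\mathbf c''001$ is $011$-free because $\mathbf c''$ is, so it is the greedy expansion of that number and has length $n+2$; by (b) there is no length-$n$ expansion, i.e.\ $N=0$.

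For (iii), splitting $[\mathbf c00]$ by its last two digits yields
\[\#[\mathbf c00]=\#[\mathbf c]+A+B+C,\]
where $A,B,C$ count the length-$n$ expansions of $\pi(\mathbf c)-\varphi^{-(n+2)}$, $\pi(\mathbf c)-\varphi^{-(n+1)}$ and $\pi(\mathbf c)-\varphi^{-n}$ respectively (the last pair $11$ via $\varphi^{-(n+1)}+\varphi^{-(n+2)}=\varphi^{-n}$). It suffices to exhibit one length-$n$ binary word realizing one of these values. Write $\mathbf c=\mathbf c'10^h$ with $h\ge0$, $|\mathbf c'|=n-1-h$, and expand the isolated $1$ at position $n-h$: iterating the identity above,
\[\varphi^{-(n-h)}=\sum_{j=0}^{k-1}\varphi^{-(n-h+1+2j)}+\varphi^{-(n-h+2k)}\qquad(k\ge1).\]
If $h$ is even, $k=h/2$ gives $\pi(\mathbf c)-\varphi^{-n}=\pi\big(\mathbf c'0(10)^{h/2}\big)$, so $C\ge1$; if $h$ is odd, $k=(h+1)/2$ gives $\pi(\mathbf c)-\varphi^{-(n+1)}=\pi\big(\mathbf c'0(10)^{(h-1)/2}1\big)$, so $B\ge1$. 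In either case $A+B+C\ge1$, hence $\#[\mathbf c00]\ge\#[\mathbf c]+1$. (Only the shape $\mathbf c=\mathbf c'10^h$ is needed here, not the greediness of $\mathbf c$.)

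The recursions are immediate once the last-digit partition is in place; the substance of the proof is deciding which of the shifted numbers $\pi(\mathbf c)-\varphi^{-k}$ still admits a short expansion. The point deserving the most care is the equivalence used in (i)--(ii) between being $011$-free, being lexicographically maximal in the class, and being the shortest expansion: the forward implications are easy and the converse rests on the tail bound $\sum_{i>j}\varphi^{-i}=\varphi^{-j}$, so I would record it as a short lemma (consistent with the Introduction and \cite{parry1960,frou1,frou2}). In (iii) the only delicate point is the parity of $h$: the $h$ positions $n-h+1,\dots,n$ liberated by expanding the lone $1$ absorb the deficit exactly to level $n$ when $h$ is even but only to level $n+1$ when $h$ is odd, which is why the witness falls into $C$ in the first case and into $B$ in the second.
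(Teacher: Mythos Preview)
Your proof is correct. For (i) and (iii) your approach is essentially the paper's, just organized through a systematic partition of $[\mathbf c1]$, $[\mathbf c00]$ by their last one or two digits; the paper instead exhibits directly the extra element $\bar{\mathbf c}\mathbf t\in[\mathbf c00]$ (your witnesses $\mathbf c'0(10)^{h/2}11$ and $\mathbf c'0(10)^{(h-1)/2}110$ are the same words, read at length $n+2$).

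For (ii) you take a genuinely different route. The paper first uses optimality of the greedy expansion to force $d_n=1$, and then, to rule out $d_{n+1}=1$, invokes Proposition~\ref{p1}(iii) on the possible sizes $|I_{\bar{\mathbf c}}\cap I_{\bar{\mathbf d}}|$ to obtain a contradiction with $x_c-x_d=\varphi^{-(n+1)}$. Your argument avoids this geometric detour: from $c_n=1$ you compute $\pi(\mathbf c)-\varphi^{-(n+1)}=\pi(\mathbf c''001)$, observe that $\mathbf c''001$ is $011$-free (appending $00$ to a $011$-free word never creates a $011$, and the final $1$ is preceded by $00$), hence is the greedy expansion of that value with length $n+2$, so no length-$n$ expansion exists. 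This is shorter and self-contained; the price is that it relies on the lemma ``greedy $=$ shortest'' (your fact (b)), which you rightly flag as worth recording explicitly---it is the content of the optimality statement the paper quotes from \cite{optimal}.
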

\begin{proof}
We begin by proving (i). Let  $\mathbf c 1\in \mathbf G_{n+1}^0$, and remark that by definition of $\mathbf G_{n+1}^0$, the sequence $\mathbf c 1$ is a greedy expansion. Let $x:=\sum_{i=1}^{n+1} c_i/\varphi^{i}$. Now, recall from \cite{optimal} that the greedy expansions in base $\varphi$ such as $\mathbf c 1$ are optimal, in particular they minimize the reminder $x-\sum_{i=1}^n\frac{c_i}{\varphi^i}$. Therefore, if $\mathbf d d_{n+1}\in[\mathbf c 1]$, then 
$$\frac{d_{n+1}}{\varphi^{n+1}}=x-\sum_{i=1}^{n}\frac{d_i}{\varphi^i}\geq x-\sum_{i=1}^n\frac{c_i}{\varphi^i}=\frac{1}{\varphi^{n+1}}.$$
Hence $d_{n+1}=1$ and, consequently, $\mathbf d\in[\mathbf c]$. Therefore $$\#[\mathbf c 1]=\#\{\mathbf d 1\mid \mathbf d\in[\mathbf c]\}=\#[\mathbf c].$$

We proceed in proving (ii). If $\mathbf c0\in \mathbf G_{n+1}^1$ then $\mathbf c=c_1\cdots c_{n-1}10$ and $\mathbf c0$ is the greedy expansion of length $n+1$ of $x:=\sum_{i=1}^{n-1}c_i/\varphi^i+1/\varphi^{n}$. Let $d_1\cdots d_{n+1}\in[\mathbf c 0]$, so that $x=\sum_{i=1}^{n+1}d_i/\varphi^i$. As above, by the optimality of $\mathbf c0$ we deduce that 
$$\frac{1}{\varphi^{n}}\leq \frac{d_{n}}{\varphi^n}+\frac{d_{n+1}}{\varphi^{n+1}}$$
and this implies $d_n=1$. Moreover 
\begin{equation}\label{siu}\sum_{i=1}^{n}\frac{c_i}{\varphi^i}-\sum_{i=1}^{n}\frac{d_i}{\varphi^i}=\frac{d_{n+1}}{\varphi^{n+1}}\in\left\{0,\frac{1}{\varphi^{n+1}}\right\}.\end{equation}
Now, as $c_n=1$ then either $c_1\cdots c_n=(1)^n$ or $c_{n-1}=0$. We claim that in both cases $d_{n+1}=0$.
\begin{itemize} \item 
If $c_1\cdots c_{n}=(1)^n$ then  $\sum_{i=1}^{n}\frac{c_i}{\varphi^i}-\sum_{i=1}^{n}\frac{d_i}{\varphi^i}$ is either $0$ or greater than $\varphi^{-n}$. In view of \eqref{siu}, we deduce that only the first case can occur and, in particular, that $d_{n+1}=0$.
\item If $c_{n-1}=0$, then define 
$$x_c:=\sum_{i=1}^{n-1}\frac{c_i}{\varphi^i} \qquad x_d:=
\sum_{i=1}^{n-1}\frac{d_i}{\varphi^i}$$ 
Note that, since $c_n=d_n$, then $x_c-x_d=d_{n+1}/\varphi^{n+1}$.
 Assume, in order to seek a contradiction, that $d_{n+1}=1$ so that  
\begin{equation}\label{s} x_c-x_d=\frac{1}{\varphi^{n+1}}.\end{equation}
In particular, setting $\bar{\mathbf c}=c_1\cdots c_{n-1}$ 
and $\bar{\mathbf d}=d_1\cdots d_{n-1}$ we deduce that the intervals $I_{\bar {\mathbf c}}=[x_c,x_c+1/\varphi^{n-2}]$ and  $I_{\bar {\mathbf d}}=[x_d,x_d+1/\varphi^{n-2}]$ have non-empty intersection. Then, by Proposition \ref{p1}, 
 $$x_d+\frac{1}{\varphi^{n-2}}-x_c=
 |I_{\mathbf {\bar d}}\cap I_{\mathbf {\bar c}}|\in \left\{0,\frac{1}{\varphi^{n-2}},\frac{1}{\varphi^{n-1}},\frac{1}{\varphi^{n}}\right\}$$ and, consequently,
$$x_c-x_d\in\left\{0,\frac{1}{\varphi^{n-2}},\frac{1}{\varphi^{n-1}},\frac{1}{\varphi^{n}}\right\}.$$
By \eqref{s} we get the required contradiction and we conclude $d_{n+1}=0$.
\end{itemize}
As $d_{n+1}=c_{n+1}=0$, we have that 
 $d_{1}\cdots d_{n}\in[\mathbf c]$ and we conclude that 
 $$\#[\mathbf c 0]=\#\{d_1\cdots d_n0\mid  d_{1}\cdots d_{n}\in[\mathbf c]\}=\#[\mathbf c]$$
and this concludes the proof of (ii).

To prove (iii), we preliminary remark that the redundancy of the representation in base $\varphi$ emerges (only) from the substitution $100\mapsto011$. For instance, the class $[1000]$ contains 2 elements: $1000$ itself and $0110$. More generally we have
$$[10^{2m}]=\{10^{2m},0110^{2(m-1)},01010^{2(m-2)},\dots, (01)^m1\}.$$
 Similarly
 $$[10^{2m+1}]=\{10^{2m+1},0110^{2(m-1)+1},01010^{2(m-2)+1},\dots, (01)^m10\}.$$
Incindently, we remark that  
 $$\#[10^h]=1+\left\lfloor \frac{h}{2}\right\rfloor \quad \forall h\geq 0.$$
Now, clearly if $\mathbf d\in [\mathbf c]$ then $\mathbf d00\in [\mathbf c00]$. On the other hand, we define
$$\mathbf t:=\begin{cases}
    (01)^{m+1}1 \quad &\text{if }h=2m;\\
    (01)^{m}10 \quad &\text{if }h=2m-1;
\end{cases}$$
and note that $\mathbf t\in [10^{h+2}]$. 
 Let $\mathbf c=\bar{\mathbf c}10^h$ and  $\mathbf d:=\bar{\mathbf c}\mathbf t$. Then we have
$$\sum_{i=1}^{n+h+3}\frac{d_i}{\varphi^i}=\sum_{i=1}^{n}\frac{c_i}{\varphi^i}+\frac{1}{\varphi^n}\sum_{i=1}^{h+3}\frac{t_i}{\varphi^i}=\sum_{i=1}^{n}\frac{c_i}{\varphi^i}+\frac{1}{\varphi^{n+1}}=\sum_{i=1}^{n+h+3}\frac{c_i}{\varphi^i}.$$
Therefore $\mathbf d\in[c00]$. We conclude that
$$[\mathbf c00]\supseteq \{\hat {\mathbf c} 00\mid \hat {\mathbf c}\in [\mathbf c]\} \cup \{\mathbf d\}$$ 
and this implies the claim $[\mathbf c 00]\geq [\mathbf c]+ 1$.
\end{proof}

\section{Critical probabilities}\label{s3}
In this section we state and prove our main result.
\begin{theorem}
Let $p\in[0,1]$. If $p\leq 1/2$ then the random Cantor set $\mathcal C^p$ is empty almost surely.
For $p\in(1/2,\varphi/2)$ we have
$$\EE(\dim (\mathcal C^p))=\frac{\log(2p)}{\log \varphi}.$$
For $p\in(\varphi/2,1]$ we have
$$\EE(\dim (\mathcal C^p))=1.$$
However, $C^p$ has no connected components almost surely for all $p\in(0,1)$. 
\end{theorem}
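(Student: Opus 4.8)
\emph{Strategy.} The plan is to read the whole statement off the Galton--Watson structure behind the construction. Conditioning on the random genealogy, the depth-$n$ vertices of the underlying random binary tree form a Galton--Watson population $Z_n$ with offspring law $\mathrm{Bin}(2,p)$, hence of mean $m=2p$; each such vertex $\mathbf c$ carries the interval $I_{\mathbf c}$, of length $\varphi^{-(n-1)}$, and a fixed word of length $n$ belongs to the tree with probability exactly $p^{n}$. Since distinct vertices may carry the same interval, $\#\mathcal I^p_n\le Z_n$, while by Proposition \ref{p1} also $\#\mathcal I^p_n\le\#\mathcal I_n=F_{n+3}-1$. If $p\le 1/2$ then $m\le 1$ with a non-degenerate offspring law, so the tree is a.s.\ finite; since $\mathcal C^p\ne\emptyset$ would force, by K\"onig's lemma, an infinite ray and hence a nested sequence of retained intervals with common point, we get $\mathcal C^p=\emptyset$ a.s., so its dimension is $0$.

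\emph{Upper bound for the dimension.} For any $p$ and $n$, $\mathcal C^p\subseteq\mathcal C^p_n$ is covered by the $\#\mathcal I^p_n$ intervals of $\mathcal I^p_n$, all of length $\varphi^{-(n-1)}$, so (Hausdorff dimension being dominated by the lower box-counting dimension, and the extinction case contributing $0$) $\dim\mathcal C^p\le\liminf_n\bigl(\log^+\#\mathcal I^p_n\bigr)/\bigl((n-1)\log\varphi\bigr)$. Taking expectations, using Fatou's inequality, then Jensen's inequality for $\log$, and then $\EE[\#\mathcal I^p_n]\le\min\{(2p)^n,\,F_{n+3}-1\}$, I obtain $\EE[\dim\mathcal C^p]\le\min\{\log(2p)/\log\varphi,\,1\}$, which is $\le0$ (hence $0$, with the trivial lower bound) for $p\le 1/2$, equals $\log(2p)/\log\varphi$ for $1/2<p<\varphi/2$, and equals $1$ for $\varphi/2\le p\le 1$; so the claimed value is an upper bound in each regime.

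\emph{Lower bound for the dimension.} Assume $p>1/2$, so $m=2p>1$. Following the energy (mass-distribution) method, put on $\mathcal C^p_n$ the random measure $\mu_n:=(2p)^{-n}\sum_{|\mathbf c|=n,\ \mathbf c\in\mathrm{tree}}\bigl(\mathrm{Leb}|_{I_{\mathbf c}}\bigr)/|I_{\mathbf c}|$, so that $\mu_n(\RR)=Z_n/(2p)^n$. As the offspring law is bounded, the Kesten--Stigum theorem gives $\mu_n(\RR)\to W$ a.s.\ with $\EE[W]=1$ and $W>0$ on the survival event; a weak subsequential limit $\mu$ of $(\mu_n)$ is then carried by $\bigcap_n\mathcal C^p_n=\mathcal C^p$ and has total mass $W$. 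Fix $s<\min\{1,\log(2p)/\log\varphi\}$. The crucial step is the uniform bound $\sup_n\EE\bigl[\iint|x-y|^{-s}\,d\mu_n(x)\,d\mu_n(y)\bigr]<\infty$: grouping pairs $(x,y)$ by the scale $\varphi^{-k}$ at which the two level-$n$ intervals carrying them split apart, the contribution of scale $k$ is $\lesssim\varphi^{ks}(2p)^{-k}$, and $\sum_k(\varphi^{s}/2p)^{k}$ converges exactly because $\varphi^{s}<2p$, while the same-interval (diagonal) terms are summable since $s<1$. By Fatou the $s$-energy of $\mu$ is a.s.\ finite, so Frostman's lemma yields $\dim\mathcal C^p\ge s$ on $\{W>0\}$; letting $s\uparrow\min\{1,\log(2p)/\log\varphi\}$ and combining with the upper bound gives the asserted value of $\dim\mathcal C^p$ on the survival event, hence of $\EE[\dim\mathcal C^p]$.

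\emph{Total disconnectedness, and the main obstacle.} If $\mathcal C^p$ had a connected component of diameter $\delta>0$, then for every $n$ this component would lie inside a connected component of $\mathcal C^p_n$, i.e.\ inside an unbroken run of $\ge\delta\varphi^{n-1}$ mutually overlapping retained level-$n$ intervals within a window of length $\le 2\delta$; a union bound over the $O(1/\delta)$ windows, then $n\to\infty$, then $\delta\downarrow 0$, shows this has probability $0$ when $p<1$, so $\mathcal C^p$ is totally disconnected a.s. The recurring difficulty — and the reason the proof is not a verbatim transcription of the OSC case of \cite{fal86} — is the failure of the Open Set Condition: two level-$n$ intervals at distance $\asymp\varphi^{-k}$ need not share a length-$k$ genealogical prefix, and an interval reached through several coincident codings is retained with probability larger than $p^{n}$. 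Controlling these effects — pinning down the constants in the energy series and in the run count, and estimating $\EE[\#\mathcal I^p_n]$ itself — is precisely where the explicit overlap bookkeeping of Propositions \ref{p1} and \ref{p2} (the admissible overlap lengths $0,\varphi^{-(n+1)},\varphi^{-n},\varphi^{-(n-1)}$ and the recursions for $\#[\mathbf c]$) enters; I expect the energy estimate in the overlapping regime to be the single hardest point.
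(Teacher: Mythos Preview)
Your extinction argument and your upper bound are the paper's own: Galton--Watson with offspring law $\mathrm{Bin}(2,p)$ for $p\le 1/2$, and for the upper bound the cover $\mathcal I^p_n$ together with $\EE[\#\mathcal I^p_n]\le\min\{(2p)^n,F_{n+3}-1\}$. No objection there.

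Where you diverge from the paper is the lower bound, and this is also where your proposal is not a proof but a plan with an unresolved obstruction. You propose the energy method: spread mass $(2p)^{-n}$ over each surviving word and bound $\EE\iint|x-y|^{-s}\,d\mu_n\,d\mu_n$ by grouping pairs by the genealogical split level $k$. In the OSC case this gives the series $\sum_k(\varphi^s/2p)^k$ because split level $k$ forces distance $\asymp\varphi^{-k}$. Here that implication fails in the hard direction: words splitting at level $1$ can code the \emph{same} interval (e.g.\ $011$ versus $100$), so their contribution to the energy is $\asymp\varphi^{ns}$, not $\varphi^{s}$. This is not a matter of constants: once $p>\varphi/2$ there are on the order of $(2p)^n-\varphi^n$ such coincident pairs, and their expected contribution to the energy is of order $(2p)^{-2n}\cdot\bigl((2p)^n-\varphi^n\bigr)\cdot\varphi^{ns}$, which blows up for every $s>0$. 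Your measure $\mu_n$, built on \emph{words} rather than \emph{intervals}, is thus too clumpy to have finite $s$-energy in the regime $p>\varphi/2$, exactly where you need $s\uparrow 1$. You see this yourself in your last paragraph, but labelling it ``pinning down the constants'' undersells it; as written, the energy series does not converge.

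The paper avoids the energy method entirely. It reduces the lower bound to the single asymptotic $\log\EE_n\sim n\log\min\{2p,\varphi\}$, where $\EE_n=\EE[\#\mathcal I^p_n]=\sum_{\mathbf c\in\mathbf G_n}\bigl(1-(1-p^n)^{\#[\mathbf c]}\bigr)$, and then attacks this expectation combinatorially: partitioning $\mathbf G_n$ by the trailing block $10^h$ and using Proposition~\ref{p2} yields a recursion of Fibonacci type, $\EE_n(p)\ge\EE_{n-1}(p^{n/(n-1)})+\EE_{n-2}(p^{n/(n-2)})+2^{n-2}p^n(1-p^n)$, which unwinds to $\EE_n\gtrsim\min\{(2p)^n,\varphi^n\}$. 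So the overlap bookkeeping you anticipate needing for the energy estimate is used in the paper, but for a completely different purpose (controlling $\#[\mathbf c]$ inside a closed-form expectation), and the geometric/potential-theoretic machinery never appears. If you want to salvage the energy route, you would need to build $\mu_n$ on the \emph{distinct} intervals of $\mathcal I^p_n$ rather than on words, at which point the martingale normalisation is no longer $(2p)^{-n}$ and you are back to estimating $\EE[\#\mathcal I^p_n]$ anyway.
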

\begin{proof}
The first part of the claim, namely the fact that the critical extinction probability for $\mathcal C^p$ is $1/2$, trivially follows by classical percolation theory. Indeed, we can regard the construction of $\mathcal C^p$ in terms of random binary trees. More precisely, let $\mathcal T$ be a binary tree with nodes identified by finite words $\mathbf c\in\{0,1\}^*:=\cup_{n\geq 0} \{0,1\}^n$. The root is the empty set and the two children of a node $\mathbf c$ are $\mathbf c0$ and $\mathbf c1$. Each child is retained with probability $p$ and discarded with probability $1-p$. Then the map \eqref{map} establishes a (non-injective) relation between the $n$-level $\mathcal T_n$ of  $\mathcal T$ and $\mathcal I_n^p$. In particular $\mathcal I_n^p$ is empty if and only if $\mathcal T_n$ is empty and the latter event has $\lim_{n\to+\infty} p^n= 0$ probability, see for instance \cite{SDA18,GD20}. 

\medskip
We now discuss the case $p>1/2$. 
Call $\EE_n^p$ the expected number of elements of $\mathcal I^p_n$. First we prove that 
\begin{equation}\label{e1}\EE(\dim (\mathcal C^p))=\lim_{n\to\infty }\frac{\log \EE_n}{n\log \varphi}.\end{equation}

 Define the random variable $$N_n(r):=\min \{\# S \mid S \text{ is a cover of }\mathcal C^p \text{ of diameter }r\}.$$ By Proposition \ref{p1}, the random prefractal $\mathcal C_n^p$ is a $\frac{1}{\varphi^{n-1}}$-cover of $\mathcal C^p$, and we deduce 
$$\EE\left(N_n\left(\frac{1}{\varphi^{n-1}}\right)\right)=\EE_n$$
so that, using the symbol  $\dim_{\text{BOX}}$ to denote the box counting dimension, 
\begin{align*}
    \EE(\dim (\mathcal C^p))&\leq \EE(\dim_{\text{BOX}} (\mathcal C^p))=\liminf_{r\to 0} \frac{\log \EE(N(r))}{-\log r}\\
    &\leq \lim_{n\to \infty} \frac{\log \EE_n}{n\log \varphi}.
\end{align*}

Now, to provide a lower estimate for $\EE(\dim \C^p) $ we consider the following process. First let us provide an order on the intervals in $\mathcal I^p_n=\{I_{n,i}\}$ so that the left endpoint of $I_i$ is smaller of the left endopoint of $I_{i+1}$. By Proposition \ref{p1} we have that $ I_{n,i}\cap I_{n,i+3}=\emptyset$ for all $i=1,\dots, \#\mathcal I_n^p-3$, because the maximum intersection for non-overlapping intervals is $\varphi^{-n}$. Then define for all $n\geq 0$ 
$$\hat \C_n^p:=\bigcup_{i=1}^{\lfloor \#\mathcal I_n^p/3\rfloor} I_{n,3i}. $$
Note that $\hat \C^p_n\subseteq \C^p$ contains $\lfloor \#\mathcal I_n^p/3\rfloor\simeq \#\mathcal I_n^p$ connencted components of size $\varphi^{-n+1}$. Considering the limit set
$$\hat \C^p:=\bigcap_{n\geq 0} \hat \C_n^p$$
we then have that $\hat \C^p\subseteq \C^p$. Then one can deduce  $\EE(\dim_{BOX} \hat \C^p)=\EE(\dim \hat \C^p)$ (see for instance \cite{falbook} and, consequently
\begin{align*}\lim_{n\to\infty}\frac{\EE_n}{n \log\varphi}&=\lim_{n\to\infty}\frac{\EE(\#\mathcal I_n)}{(n-1)\log\varphi}\\
&=\EE(\dim_{BOX} \hat \C^p)=\EE(\dim \hat \C^p)\leq \EE(\dim_{BOX} \C^p).\end{align*}
This concludes the proof of \eqref{e1} and moves the problem to the estimation of $\EE_n$. To conclude the proof of the theorem we need to prove 
\begin{equation}\label{e2}\log \EE_n\sim n  \log(\min \{(2p),\varphi\})\quad \text{as $n\to\infty$}.\end{equation}

 We clearly have $\# \mathcal I_n\leq 2^n$, hence 
\begin{align*}\EE_n^p&=\sum_{I\in \mathcal I^p_n}\PP(I \text{ is retained})\leq (2p)^n
\end{align*}
On the other hand, by Proposition \ref{p1}, the cardinality of $\mathcal I_n^p$ is bounded by $F_{n+3}-1\sim \varphi^{n}$, hence 
\begin{equation}\label{e3}\EE_n\leq \min \{(2p)^n,F_{n+3}-1\}\leq \min \{(2p)^n,\varphi^{n+1}\}.\end{equation}

To prove a lower estimate, we write $\EE_n$ explicitly as a function of $p$:
\begin{align*}
     \EE_n &=\sum_{[\mathbf c]\in \mathbf G_n}\,\PP(\text {at least one }\mathbf d \in [\mathbf c]\text{ survives in the binary tree})\\
&= \sum_{[\mathbf c]\in \mathbf G_n}\,1-(1-p^n)^{\#[\mathbf c]}=:\EE_n(p)
\end{align*}


Using the partition of the set of greedy expansions $\mathbf G_n$ in \eqref{gn} we have
$$\EE_n(p)=\sum_{h=-1}^{n-1}\sum_{[\mathbf c]\in \mathbf G_n^h}\,1-(1-p^n)^{\#[\mathbf c]}.$$
By Proposition \ref{p2} we have
\begin{enumerate}[(i)]
\item if $\mathbf c\in \mathbf G_n^{-1}$ then by definition $\mathbf c=(0)^n$ and $\#[\mathbf c]=1$ for all $n$
\item if $\hat{\mathbf c}\in \mathbf G_n^{0}$ then $\hat {\mathbf c}=\mathbf c 1$ for some $\mathbf c\in  \mathbf G_{n-1}^h$ with $h\not=0$ (because greedy expansions cannot have occurrence of the subword $011$) and $\#[\hat{\mathbf c} ]=\#[\mathbf c]$;
\item if $\hat{\mathbf c}\in \mathbf G_n^{1}$ then $\hat {\mathbf c}=\mathbf c 0$ for some $\mathbf c\in \mathbf G_{n-1}^{0}$ and $\#[\hat{\mathbf c} ]=\#[\mathbf c]$;
\item if $\hat{\mathbf c}\in \mathbf G_n^{h}$ with $h\geq 2$ then $\hat {\mathbf c}=\mathbf c 00$ for some $\mathbf c\in \mathbf G_{n-2}^{h-2}$ and $\#[\hat{\mathbf c} ]\geq \#[\mathbf c]+1$. In particular, for all $h\geq 0$ and if $x\in [0,1]$ then 
$$1-(1-x)^{\#[\mathbf c00]-\#[\mathbf c]}\geq x.$$
\end{enumerate}
The above considerations allows us to establish the following recursive relation on $\EE_n(p)$:
\begin{align*}
\EE_n(p)=&\sum_{h=-1}^{n-1}\sum_{[\mathbf c]\in \mathbf G_n^h}\,1-(1-p^n)^{\#[\mathbf c]}\\
=&p^n+ \sum_{\mathbf c\in \mathbf G_n^0}\,1-(1-p^n)^{\#[\mathbf c]}+  \sum_{\mathbf c\in \mathbf G_n^1}\,1-(1-p^n)^{\#[\mathbf c]}
\\&+\sum_{h=2}^{n-1}\sum_{\mathbf c\in \mathbf G_n^0}\,1-(1-p^n)^{\#[\mathbf c]}\\
=&p^n+ \sum_{h=-1, h\not=0}^{n-2} \sum_{\mathbf c\in \mathbf G_{n-1}^h}\,1-(1-p^n)^{\#[\mathbf c 1]}+ \sum_{\mathbf c\in \mathbf G_{n-1}^0}\,1-(1-p^n)^{\#[\mathbf c0]}\\
&+
\sum_{h=0}^{n-3}\sum_{\mathbf c\in \mathbf G_{n-2}^{h}}\,1-(1-p^n)^{\#[\mathbf c00]}\\
=&p^n+ \sum_{h=-1}^{n-2} \sum_{\mathbf c\in \mathbf G_{n-1}^h}\,1-(1-p^n)^{\#[\mathbf c]}\\
&+
\sum_{h=0}^{n-3}\sum_{\mathbf c\in \mathbf G_{n-2}^{h}}\,1-(1-p^n)^{\#[\mathbf c00]}\\
=&p^n+\EE_{n-1}(p^\frac{n}{n-1})+
\sum_{h=0}^{n-3}\sum_{\mathbf c\in \mathbf G_{n-2}^0}\,1-(1-p^n)^{\#[\mathbf c00]}\\
=&\EE_{n-1}(p^\frac{n}{n-1})+
\sum_{h=-1}^{n-3}\sum_{\mathbf c\in \mathbf G_{n-2}^h}\,1-(1-p^n)^{\#[\mathbf c00]}\\
=&\EE_{n-1}(p^\frac{n}{n-1})+\EE_{n-2}(p^\frac{n}{n-2})+\sum_{h=-1}^{n-3}\sum_{[\mathbf c]\in \mathbf G_{n-2}^h}\,(1-p^n)^{\#[\mathbf c]}-(1-p^n)^{\#[\mathbf c00]}\\
=&\EE_{n-1}(p^\frac{n}{n-1})+\EE_{n-2}(p^\frac{n}{n-2})+\sum_{h=-1}^{n-3}\sum_{\mathbf c\in \mathbf G_{n-2}^h}\,(1-p^n)^{\#[\mathbf c]}(1-(1-p^n)^{\#[\mathbf c00]-\#[\mathbf c]})\\
\geq&\EE_{n-1}(p^\frac{n}{n-1})+\EE_{n-2}(p^\frac{n}{n-2})+p^n\sum_{h=-1}^{n-3}\sum_{\mathbf c\in \mathbf G_{n-2}^0}\,(1-p^n)^{\#[\mathbf c]}.
\end{align*}
Recalling that summing the cardinality of all the congruence classes of lenght $n-2$ returns by construction the cardinality of the initial set $\{0,1\}^{n-2}$, we deduce $$\sum_{h=-1}^{n-3}\sum_{[\mathbf c]\in \mathbf G_{n-2}^0}\,{\#[\mathbf c]}=2^{n-2}$$
and, since $(1-p^n)^{\#[\mathbf c]}\geq \#[\mathbf c](1-p^n)$,
\begin{equation}\label{eqrec}
\EE_n(p)\geq \EE_{n-1}(p^\frac{n}{n-1})+\EE_{n-2}(p^\frac{n}{n-2})+{2^{n-2}}p^n(1-p^n).
\end{equation}
Applying the above inequality to $\hat n=n-k$ for some $k=0,\dots,3$ and $\hat p=p^\frac{n}{n-k}$ we deduce
\begin{equation}\label{eqreck}\EE_{n-k}(p^\frac{n}{n-k})\geq \EE_{n-k-1}(p^\frac{n}{n-k-1})+\EE_{n-k-2}(p^\frac{n}{n-k-2})+{2^{n-k-2}}p^n(1-p^n).
\end{equation}
Setting 
 $$b_{n,k}:={2^{n-k-2}}p^n(1-p^n)$$ 
 and, iterating \eqref{eqrec}, we end up with 
 $$
 \EE_n(p)\geq F_{n-1}\EE_{2}(p^\frac{n}{2})+F_{n-2}\EE_{1}(p^n)+\sum_{k=0}^{n-3}b_{n,k}.
 $$
Now, by a direct computation $\EE_2(p^{n/2})=4p^n=2\EE_1(p^n)$. Since $4F_n\geq \varphi^n$ for all $n$, then $n\geq 3$
\begin{align*}
 \EE_n(p)&\geq 4p^n F_{n} + p^n(1-p^n)\sum_{k=3}^{n}2^{k-2}\\
 &= 4p^n F_{n} +  p^n(1-p^n)(2^{n-1}-2)\\
 & \geq (2^{n-1}-2) p^n+p^n \left(\varphi^n -  (2^{n-1}-2) p^n\right).
\end{align*}
If $p\leq \varphi/2$ then we readily get by above inequality 
\begin{equation}\label{e10}
 \EE_n(p)\geq  (2^{n-1}-2) p^n.
\end{equation}
If otherwise $p> \varphi/2$ then $p^n\in[\varphi^n/2^n,1]$ and
\begin{equation*}
 \EE_n(p)\geq \min_{x\in [\varphi^n/2^n,1]}  (2^{n-1}-2) x(1+\frac{\varphi^n}{2^{n-1}-2}-x)>\frac{2^{n-1}-2}{2^n}\varphi^n.
\end{equation*}
Putting together \eqref{e10} and the above inequality, we conclude
\begin{equation*}
 \EE_n(p)\geq \frac{2^{n-1}-2}{2^n} \min\{2^{n} p^n,\varphi^n\}
\qquad \forall p\in[1/2,1]\end{equation*}
which, together with \eqref{e3}, implies \eqref{e2} and, finally that $$\EE(p)=\min\{\log(2p)/\log(\varphi),1\}.$$

\medskip The last part of the proof concerns the topology of $\C$. We have that $\C$ contains an interval if and only if an interval $I\in \mathcal I_N^p$ belongs to $\C_n$ for all $n\geq N$. This means that $I$ is retained for infinitely many iterations, an event that has null probability --- remark that if $p\in [1/2,\varphi/2]$ this can be trivially deduced from the  $\C$ has a non-integer Hausdorff dimension. 
\end{proof}
\section{Conclusions and perspectives}\label{s4}
In this paper, we addressed the topology and the dimension of a random Cantor set with overlaps, generated by an IFS with scaling ratio equal to $\varphi$. The results extend known formulas to a case where OSC fails. Due to the role of $\varphi$ as a threshold for the cardinality of unique representations in non-integer base, we believe that the present study paves the way to the morphology of random Cantor sets at least in the case of a scaling ratio $q\in(1,\varphi)$, with possible extensions to $q\in(1,2)$. 

We also point out that the extension to the higher-dimensional case deserves an ad hoc study, because a random Cantor dust is a random process distinct from considering the cartesian product of unidimensional random Cantor sets, being the latter a random process in which the base and the height of each subrectangle are independent, see for instance \cite{falgri92}. However we believe that this extension would be worthwhile of investigation because it would allow to look into percolation processes in this redundant scenario. 
\bibliographystyle{alpha}
\bibliography{biblio_cantor}
\appendix
\section{Proof of Proposition \ref{p1}}\label{app1}
\begin{enumerate}[i)]
\item By above argument, we can choose as class representatives the greedy expansions in base $\varphi$ of length $n$. Then the problem is equivalent to establishing the cardinality of $\mathbf G_n$. Now 
$\mathbf G_n^H:=\{\mathbf c\mid \text{the last digit of $\mathbf c$ is $0$}\}$ so that $\#\mathbf G_n=1+\#\mathbf G_n^0+\#\mathbf G_n^H$. Now, if $\mathbf c\in \mathbf G_n$, then  $\mathbf c c\in \mathbf G_{n+1}$ for any $c\in\{0,1\}$ unless the resulting word $\mathbf c c$ ends with $011$, that is forbidden in greedy expansions.  In particular, if $\mathbf c\in \mathbf G_n^0$ then $\mathbf c c\in\mathbf G_{n+1}$ if and only if $c=0$. Then we have the recursive system: 
$$\begin{cases}\#\mathbf G_{n+1}=1+\# \mathbf G_{n}^0+2\# \mathbf G_{n}^H\\
\#\mathbf  G_{n+1}^0=\# \mathbf G_{n}^H\\
\#\mathbf  G_{n+1}^H=\#\mathbf  G_{n}^0+\#\mathbf G_{n}^H\\
\end{cases}$$
with intial conditions $\# \mathbf G_{1}^0=0,\#\mathbf G_{1}^H=1,\#\mathbf G_{1}=2$, whose solution is $\mathbf G_{n}=F_{n+3}-1$,$\mathbf G^H_{n}=F_{n+2}-1$,$\mathbf G^0_n=F_{n+1}$.
\item Immediate from the definition of $I_\mathbf c$.
\item  Let $I_\mathbf c\cap I_\mathbf d\not=\emptyset$. Assume, without loss of generality, that $\mathbf c$ and $\mathbf d$ are greedy expansions and that the left endpoint of $I_\mathbf d$ falls into $I_\mathbf c$. This implies that 
$$\delta_n:=\sum_{i=1}^{n}\frac{d_i-c_i}{\varphi^i}\in [0,\frac{1}{\varphi^{n-1}}]. $$
Since $\mathbf d$ and $\mathbf c$ are both greedy, then $\delta_n\geq 0$ implies that $\mathbf d\geq \mathbf c$ in the lexicographic sense \cite{parry1960}, namely if $n_0$ is the smallest index such that $d_{n_0}\not=c_{n_0}$ then $d_{n_0}>c_{n_0}$. In particular $d_{n_0}=1$ and $c_{n_0}=0$. By the arbitrariness of $n$, we can assume that $n_0=1$. Again by \cite{parry1960} we have if $x_k=0$ for some greedy expansion $(x_i)$, then $\sum_{i=k+1}^\infty x_i/\varphi^i<1/\varphi^{k}$. When applied to $\mathbf c$, this implies that 
$$\sum_{i=2}^n \frac{c_i}{\varphi^i}<\frac{1}{\varphi}.$$
Therefore we have
$$\frac{1}{\varphi^{n-1}}>\frac{1}{\varphi}+\sum_{i=2}^n \frac{d_i}{\varphi^i}-\sum_{i=2}^n \frac{c_i}{\varphi^i}>\sum_{i=2}^n \frac{d_i}{\varphi^i}$$
and this implies that $$\sum_{i=2}^n \frac{d_i}{\varphi^i}=\frac{d_{n}}{\varphi^n}.$$ 

Moreover, since $\delta_n\in[0,1/\varphi^{n-1}]$, then 
$$\sum_{i=2}^n \frac{c_i}{\varphi^i} \in\left[\frac{1}{\varphi}-\frac{1}{\varphi^{n-1}},\frac{1}{\varphi}\right).$$

Now, we claim that $$c_1\dots c_n=\bar c_1\cdots c_n:=\begin{cases}
(01)^{n/2-1}0c_n &\text{if $n$ is even}\\
(01)^{(n-1)/2-1}0c_{n-1}c_n&\text{if $n$ is odd}.
\end{cases}.$$

In order to seek a contradiction, assume that there exists $m\leq n-2$ such that $c_m\not=\bar c_m$. As $\mathbf c$ is greedy, the block $011$ is forbidden, consequently $m$ must be even and $c_m=0$. As remarked above, \cite{parry1960} implies that $\sum_{i=m+1}^n c_i/\varphi^{i}<1/\varphi^m$. Then we have
\begin{align*}\frac{1}{\varphi}-\frac{1}{\varphi^{n-1}}&\geq
\sum_{i=2}^{n}\frac{c_i}{\varphi^i}=\frac{1}{\varphi^2}+\cdots+\frac{1}{\varphi^{m-2}}+\sum_{i=m+1}^n \frac{c_i}{\varphi^i}\\
&<\frac{1}{\varphi^2}+\cdots+\frac{1}{\varphi^{m-2}}+\frac{1}{\varphi^m}=\frac{1}{\varphi}-\frac{1}{\varphi^{m+1}}
\end{align*}
Then $m+1>n-1$ and this contradicts $m\leq n-2$. Let $\bar n:=n$ if $n$ is even and $\bar n:=n-1$ if $n$ is odd. Define $c_{n+1}=0$. We deduce that 
$$\sum_{i=1}^n \frac{c_i}{\varphi^i}=\frac{1}{\varphi}-\frac{1}{\varphi^{\bar n-1}}+\frac{c_{\bar n}}{\varphi^{\bar n}}+\frac{c_{\bar n+1}}{\varphi^{\bar n+1}}.$$
and, consequently,
\begin{align*}
|I_{\mathbf c}\cap I_{ \mathbf d}|=\sum_{i=1}^n\frac{c_i}{\varphi^i}+\frac{1}{\varphi^{n-1}}-\sum_{i=1}^n\frac{d_i}{\varphi^{i}}=\frac{1}{\varphi^{ n-1}}-\frac{1}{\varphi^{\bar n-1}}+\frac{c_{\bar n}}{\varphi^{\bar n}}+\frac{c_{\bar n+1}}{\varphi^{\bar n+1}}-\frac{b_n}{\varphi^n}.
\end{align*}
By distinguishing the cases according to the parity of $n$ and appropriate binary values of $c_{n-1},c_{n}$ and $b_n$, we obtain the claim. If $n$ is even, then $\bar n=n$ and the above equality implies $c_n\geq b_n$ and 
$$|I_\mathbf c\cap I_\mathbf d|=\frac{c_n-b_n}{\varphi^n}\in\left\{0,\frac{1}{\varphi^n}\right\}$$
If otherwise $n$ is odd, then $\bar n=n-1$ and 
$$|I_\mathbf c\cap I_\mathbf d|=-\frac{1}{\varphi^{n}}
+\frac{c_{n-1}}{\varphi^{n-1}}+\frac{c_{n}}{\varphi^{n}}-\frac{b_n}{\varphi^n}.$$
By a direct computation, restricting the evaluation to the $c_n, c_{n-1},b_n$ ensuring a positive handside in the above equality, (say for instance $c_{n-1}=1,c_{n}=d_n=0)$) we obtain the claim. 
\end{enumerate}-
\end{document}